\newtheorem{theorem}{Theorem}
\newtheorem{proposition}[theorem]{Proposition}
\newtheorem{lemma}[theorem]{Lemma}
\def\dist{\operatorname{dist}}
\def\grad{\operatorname{grad}}
\def\supp{\operatorname{supp}}
\def\DD{\Bbb D}
\def\CC{\Bbb C}
\def\BB{\Bbb B}
\def\RR{\Bbb R}
\def\PSH{\mathcal{PSH}}
\def\L{\mathcal{L}}
\def\O{\mathcal{O}}
\def\eps{\varepsilon}
\def\phi{\varphi}
\title{On a local characterization of pseudoconvex domains}
\author[N.~Nikolov, P.~Pflug, P.~J.~Thomas, W.~Zwonek]
{Nikolai Nikolov, Peter Pflug, Pascal J.~Thomas, Wlodzimierz Zwonek}
\address
{Institute of Mathematics and Informatics\\ Bulgarian Academy of
Sciences\\ Acad. G. Bonchev 8, 1113 Sofia,
Bulgaria}\email{nik@math.bas.bg}
\address{Carl von Ossietzky Universit\"at Oldenburg\\
Institut f\"ur Mathe\-ma\-tik\\ Postfach 2503\\ D-26111 Oldenburg,
Germany}\email{pflug@mathematik.uni-oldenburg.de}
\address{Institut de Math\'ematiques\\
Universit\'e Paul Sabatier, 118 Route de Narbonne, 31062 Toulouse
Cedex 9, France} \email{pthomas@cict.fr}
\address{Instytut Matematyki, Uniwersytet Jagiello\'nski, Reymonta 4,
30-059 Krak\'ow, Poland}\email{Wlodzimierz.Zwonek@im.uj.edu.pl}
\subjclass[2000]{32F17}
\keywords{pseudoconvex domain, taut domain, locally weakly
linearly convex domain}
\begin{document}

\begin{thanks}{This paper was written during the stay of the third
named author at the Institute of Mathematics and Informatics of the
Bulgarian Academy of Sciences supported by a CNRS grant and the stay
of the fourth named author at the Universit\"at Oldenburg supported
by a DFG grant No 436 POL 113/106/0-2 (July 2008).}
\end{thanks}

\begin{abstract} Pseudoconvexity of a domain in $\Bbb C^n$ is
described in terms of the existence of a locally defined
plurisubharmonic/holo\-morphic function near any boundary point that
is unbounded at the point.
\end{abstract}

\maketitle

\section{Introduction and results}

It is well-known that a domain $D\subset\Bbb C^n$ is pseudoconvex if
and only if any of the following conditions holds:
\smallskip

(i) there is a smooth strictly plurisubharmonic function $u$ on
$D$ with $\lim_{z\to\partial D}u(z)=\infty;$

(ii) for any $a\in\partial D$ there is a $u_a\in\PSH(D)$ with
$\lim_{z\to a}u_a(z)=\infty;$

(iii) there is an $f\in\mathcal O(D)$ such that for any
$a\in\partial D$ and any neighborhood $U_a$ of $a$ one has that
$\limsup_{G\ni z\to a}|f(z)|=\infty$ for any connected component
$G$ of $D\cap U_a$ with $a\in\partial G;$

(iv) for any $a\in\partial D$ there is a neighborhood $U_a$ of $a$
and an $f_a\in\mathcal O(D\cap U_a)$ such that for any
neighborhood $V_a\subset U_a$ of $a$ and any connected component
$G$ of $D\cap V_a$ with $a\in\partial G$ one has  $\limsup_{G\ni
z\to a}|f_a(z)|\\=\infty$ (see Corollary 4.1.26 in \cite{Hor}).

If $D$ is $C^1$-smooth, we may assume that $D\cap U_a$ is connected
in (iii) and (iv).
\smallskip

Our first aim is to see that in (i) in general 'lim' cannot be
weakened by 'limsup' even if $D$ is $C^1$-smooth.

\begin{theorem}\label{1} For any $\varepsilon\in(0,1)$ there is a
non-pseudoconvex bounded domain $D\subset\Bbb C^2$ with
$C^{1,1-\varepsilon}$-smooth boundary and a negative function
$u\in\PSH(D)$ with $\limsup_{z\to a}u(z)=0$ for any $a\in\partial
D.$

In particular, $v:=-\log(-u)\in\PSH(D)$ with $\limsup_{z\to
a}v(z)=\infty$ for any $a\in\partial D.$
\end{theorem}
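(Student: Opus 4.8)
The plan is to construct the domain $D$ and the function $u$ simultaneously, so that the very features producing non-pseudoconvexity are the ones that prevent $u$ from having an honest limit at the bad boundary points. Before constructing anything, I would record the obstruction that dictates the whole design: if a negative $u\in\PSH(D)$ had $\lim_{z\to a}u(z)=0$ at \emph{every} $a\in\partial D$, then, exactly as in the ``in particular'' clause, $v:=-\log(-u)\in\PSH(D)$ would be an exhaustion with $\lim_{z\to\partial D}v=+\infty$, forcing $D$ to be pseudoconvex. Hence on a non-pseudoconvex $D$ the supremum $\sup_D u=0$ can only be \emph{approached} along special sequences terminating at the bad boundary points, and never as a genuine limit there. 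The example must therefore realize $\limsup$ but not $\lim$, and the limited boundary regularity $C^{1,1-\eps}$ will be exactly what is needed to reconcile these demands.

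For the domain I would take an explicit bounded $D=\{\rho<0\}$ whose defining function $\rho$ has gradient of class $C^{0,1-\eps}$ but no better --- this is the source of the $C^{1,1-\eps}$, and not $C^{1,1}$, regularity. Concretely one builds $\rho$ from a strictly plurisubharmonic background (e.g.\ $|z|^2+|w|^2$) minus a Hölder ``bump'' carrying a term behaving like $|\cdot|^{2-\eps}$, whose complex Hessian forces a \emph{negative} Levi eigenvalue along a portion $\Sigma\subset\partial D$. This makes $D$ strictly pseudoconcave along $\Sigma$, hence non-pseudoconvex, while the remaining boundary stays pseudoconvex; boundedness and the $C^1$ property are immediate, and the only delicate regularity check is that the bump is genuinely not $C^{1,1}$.

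The function $u$ I would assemble as a negative upper envelope (or, equivalently, a convergent series $\sum_k 2^{-k}u_k$) of plurisubharmonic pieces whose near-zero sets $\{u_k>-\delta\}$ accumulate at every boundary point. On the pseudoconvex part a single explicit barrier-type term (something comparable to $\|(z,w)\|^2-R^2$ or $\log(\|(z,w)\|^2/R^2)$) already supplies $\limsup=0$. Along the pseudoconcave $\Sigma$ one uses the continuity principle in the favorable direction: each bad point is the endpoint of analytic discs lying in $D$ whose \emph{boundaries} abut the pseudoconvex part where $u$ is close to $0$, so choosing the envelope pieces to be nearly maximal on these discs pushes $u$ up to near $0$ along the corresponding interior sequences --- yielding $\limsup=0$ there without ever producing a limit.

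The main obstacle, and the technical heart of the proof, is precisely this last point: arranging $\limsup_{z\to a}u=0$ for $a\in\Sigma$ while keeping $u$ plurisubharmonic and \emph{strictly} negative on all of $D$, and doing so with a boundary of exactly class $C^{1,1-\eps}$. I expect the construction to be tuned so that the amplitude and spacing of the Hölder bump are calibrated to the exponent $1-\eps$: too much regularity (approaching $C^{1,1}$) would let the interior near-zero sequences be squeezed away from $\Sigma$ and allow the $\limsup$ to be upgraded to a $\lim$, whereas the $C^{1,1-\eps}$ wiggle leaves just enough room for the envelope to touch $0$ at $\Sigma$ from inside $D$. Verifying the sharp Hölder regularity of $\partial D$ and the strict negativity of the envelope are the routine-but-lengthy computations I would defer, the conceptual work being the coupling of the concave geometry to the upper-envelope construction.
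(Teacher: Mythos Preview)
Your overall plan --- assemble $u$ as an upper envelope of negative plurisubharmonic barriers --- matches the paper, but you have missed the geometric idea that makes the envelope actually reach every boundary point, and you have misidentified the role of the H\"older exponent.

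The paper does \emph{not} build a domain with an open pseudoconcave patch $\Sigma$. Instead it constructs (via two lemmas) a $C^{1,1-\eps}$-smooth Hartogs domain whose boundary contains a \emph{dense} set of strictly pseudoconvex points, while the strictly pseudoconcave points sit only over a Cantor set. The regularity $C^{1,1-\eps}$ enters not through a single H\"older bump of the type $|\cdot|^{2-\eps}$ but through a one-variable function $F$ built by a Cantor iteration: one superimposes concave bumps at all scales so that $F\ge 0$, $F$ vanishes on a Cantor set, and $F''\le -C_2<0$ on a dense open set. Such an $F$ cannot be $C^2$ (the conditions $F\ge 0$, $F(x_0)=0$, $F''(x_0)<0$ are incompatible at any zero $x_0$), and the iteration produces exactly class $C^{1,1-\eps}$. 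The domain is then a perturbation of a mildly pseudoconcave Hartogs domain by $\Phi(x+iy)=F(x/r_0)\chi(y/r_0)$: where $F''<0$ the added term wins and the boundary is strictly pseudoconvex; where $F=0$ the original strict pseudoconcavity survives.

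Once the strictly pseudoconvex points are dense, the construction of $u$ is immediate: pick a countable dense set $(a_j)$ of such points, take standard negative peak functions $u_j\in\PSH(D)$ with $u_j(z)\to 0$ as $z\to a_j$, normalize along an exhaustion, and let $u$ be the upper semicontinuous regularization of $\sup_j u_j/m_j$. Then $\limsup_{z\to a}u(z)=0$ at \emph{every} $a\in\partial D$ simply because some $a_j$ is arbitrarily close to $a$.

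By contrast, your proposed mechanism --- reaching points of an open pseudoconcave patch by analytic discs whose boundaries abut the pseudoconvex part and invoking ``nearly maximal'' psh pieces on those discs --- does not do what you need. Knowing that a negative subharmonic function is close to $0$ on the boundary circle of a disc gives no lower bound at interior points (think of $\log|\lambda|$ on the unit disc), so there is no way to push the envelope up to $0$ along an interior sequence converging to a point of $\Sigma$. And if $\Sigma$ has nonempty interior in $\partial D$, its interior points cannot be approximated by strictly pseudoconvex boundary points, which is exactly the device the paper relies on. In short: the missing idea is ``pseudoconcave only on a nowhere-dense set'', and the exponent $1-\eps$ is the price of the Cantor construction that achieves this, not a calibration of a single bump.
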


If we do not require smoothness of $D,$ following the idea presented
in the proof, we may just take $D=\{z\in\Bbb
C^n:\min\{||z||,||z-a||\}<1\},$ $0<||a||<2,$ $n\ge 2.$

On the other, this cannot happen if $D$ is $C^2$-smooth.

\begin{proposition}\label{2} Let $D\subset\Bbb C^n$ be a $C^2$-smooth
domain with the following property: for any boundary point
$a\in\partial D$ there is a neighborhood $U_a$ of $a$ and a function
$u_a\in\PSH(D\cap U_a)$ such that $\limsup_{z\to a}u_a(z)=\infty.$
Then $D$ is pseudoconvex.
\end{proposition}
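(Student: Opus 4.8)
The plan is to use the fact that, for a $C^2$-smooth domain, pseudoconvexity is a purely local condition on the boundary: $D$ is pseudoconvex if and only if the Levi form of a $C^2$ defining function $\rho$ is positive semidefinite at every $a\in\partial D$. Thus it suffices to prove the contrapositive locally, i.e.\ that if the Levi form at some $a$ has a negative eigenvalue, then no $u_a\in\PSH(D\cap U_a)$ can satisfy $\limsup_{z\to a}u_a=\infty$; equivalently, every such $u_a$ is bounded above near $a$. I would translate $a$ to the origin and choose holomorphic coordinates putting $\rho$ in the normal form $\rho(z)=\operatorname{Re}z_n+\sum_{j,k<n}\mu_{jk}z_j\bar z_k+E(z)$, where the pure holomorphic quadratic terms have been absorbed into $\operatorname{Re}z_n$, $(\mu_{jk})$ is the Levi form, and---crucially, because $\rho\in C^2$---the remainder obeys $E(z)=o(|z|^2)$ with $|\nabla E(z)|\le\omega(|z|)|z|$, $\omega(t)\to0$. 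After a unitary change in the tangential variables I may assume $\mu_{11}=-\nu<0$ and restrict to the slice $z_2=\dots=z_{n-1}=0$.

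The engine is the maximum principle: for an analytic disc $A\colon\overline{\DD}\to D\cap U_a$ the function $u_a\circ A$ is subharmonic, so $u_a(A(0))\le\max_{|\zeta|=1}u_a(A(\zeta))$. The idea is to attach to each point $p=(p_1,0,\dots,0,p_n)$ of $D$ near $a$ a disc centered at $p$ whose boundary lies \emph{deeper} inside $D$, and thereby transport an a priori bound from the interior up toward $a$. I would take $z_1(\zeta)=p_1+r\zeta$ and choose $z_n(\zeta)=p_n+2\nu r\bar p_1\,\zeta$ precisely to cancel the first-order term in $\zeta$; a direct computation then gives
\[
\rho(A(\zeta))=-\eps-\nu r^2|\zeta|^2+\bigl(E(A(\zeta))-E(p)\bigr),\qquad \eps:=-\rho(p)>0 .
\]
Estimating $|E(A(\zeta))-E(p)|\lesssim\omega(r)\,r^2|\zeta|$ shows that for $r\asymp\sqrt{\eps/\nu}$ the disc stays in $D$ while its boundary circle sits at depth $\gtrsim 2\eps$, i.e.\ at roughly twice the distance of $p$ to $\partial D$.

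With this family I would run a continuity (``doubling'') argument in the depth $\eps=\dist(\cdot,\partial D)$. Fixing a compact $K\subset\subset D\cap U_a$ at some fixed depth $c_0$ and setting $M:=\sup_K u_a<\infty$, a fixed-radius version of the disc (whose boundary circles, sitting at $|z_1|\approx r_0$, form a fixed compact subset of $D\cap U_a$ independent of $p$) yields $u_a\le M$ on all points of $D\cap V$ of depth $\ge c_0$, for a small $V\subset U_a$. The doubling discs then give, for points of depth $\eps<c_0$, the estimate $u_a(p)\le\sup\{u_a(q):q\in D\cap V,\ \dist(q,\partial D)\ge 2\eps\}$; iterating raises the depth geometrically until it exceeds $c_0$, whence $u_a\le M$ on all of $D\cap V$. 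This contradicts $\limsup_{z\to a}u_a=\infty$ and forces $\mu_{11}\ge0$, giving pseudoconvexity.

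The hard part will be the uniformity of the construction as $p\to a$ while simultaneously approaching $\partial D$, i.e.\ controlling $E$ for centers in the thin ``bulge'' region where the negative eigenvalue allows $\operatorname{Re}z_n>0$. It is exactly here that $\rho\in C^2$ is used: the factor $\omega(r)\to0$ from the continuity of the complex Hessian is what makes the doubling step admissible for all sufficiently shallow points, so that the iteration reaches every point of a full one-sided neighborhood of $a$. For merely $C^{1,1-\eps}$ boundaries this modulus decays too slowly, consistent with the counterexample of Theorem~\ref{1}. Making this uniform estimate precise---and, should a residual sliver against $\partial D$ remain, closing the gap with the sub-mean-value inequality---is the one genuinely technical point.
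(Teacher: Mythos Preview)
Your strategy—contrapositive via the Levi form plus the maximum principle on analytic discs running in the negative eigendirection—is exactly the paper's, and the role you assign to $C^2$-smoothness (uniform $o(|z|^2)$ control of the remainder) is correct. The difference lies in the construction of the discs, and this is precisely where the ``one genuinely technical point'' you flag disappears in the paper's version.

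The paper does \emph{not} center the discs at interior points $p$ and iterate a doubling step. Instead it parametrizes by \emph{boundary} points $z\in\partial D$ near $z_0$, chooses for each such $z$ the genuine complex tangent vector $a(z)$ there (a small perturbation of the negative eigenvector $a$ at $z_0$), and adds a \emph{quadratic} correction
\[
\phi_z(\lambda)=z+\lambda\,a(z)+\lambda^2 b_1(z)\,e_1,\qquad
b_1(z)=\frac{\mathcal L r(z,a(z))}{2\,\partial r/\partial z_1(z)},
\]
which kills the holomorphic Hessian term \emph{at $z$}. Taylor expansion then gives $r\circ\phi_z(\lambda)=|\lambda|^2\bigl(\mathcal L r(z,a(z))+\eps(z,\lambda)\bigr)$ with $\eps(z,\lambda)\to 0$ uniformly in $z$ as $\lambda\to 0$, by continuity of the second derivatives. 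Hence for one fixed $\delta_0>0$ the circles $\{\phi_z(\lambda):|\lambda|=\delta_0\}$ all sit at depth $\ge c\delta_0^2/2$ and form a single compact $K\Subset D\cap V$. Writing each $b^j\to z_0$ as $z^j-t_j\nu(z^j)$ and translating $\phi_{z^j}$ inward by $t_j\nu(z^j)$ (which stays in $D$ by the $C^2$ interior‐ball property) gives a disc with center $b^j$ and boundary in a fixed neighborhood $W$ of $K$. One application of the maximum principle yields $u(b^j)\le M:=\sup_W u<\infty$, a contradiction. No iteration is needed.

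The gap in your version is exactly the one you suspect. In your normal form the pure holomorphic quadratic part is absorbed only at $a=0$; at a nearby center $p$ the error $E$ still carries a first–order contribution $\nabla E(p)\cdot(A(\zeta)-p)$ of size $\omega(|p|)\,|p|\,r|\zeta|$. When $|p_1|\asymp\eta$ (the scale of your neighborhood $V$) but the depth $\eps$ is much smaller than $\nu\eta^2$, this term dominates $\nu r^2=\eps$ and the doubling step fails, so the iteration does not cover a full one–sided neighborhood of $a$. The cure is precisely what the paper does: recompute the tangential and second–order corrections at the nearby boundary point rather than at $a$ (equivalently, add a $\lambda^2$–term or adjust the $z_n$–slope to cancel $\nabla E$ at $p$). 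Once you do that, the fixed–radius/one–shot argument works and the doubling becomes unnecessary.
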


However, if we replace 'limsup' by 'lim', we may remove the
hypothesis about smoothness of the boundary.

\begin{proposition}\label{3} Let $D\subset\Bbb C^n$ be a domain
with the following property: for any boundary point $a\in\partial
D$ there is a neighborhood $U_a$ of $a$ and a function
$u_a\in\PSH(D\cap U_a)$ such that $\lim_{z\to a}u_a(z)=\infty.$
Then $D$ is pseudoconvex.
\end{proposition}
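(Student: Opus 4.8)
The plan is to argue by contradiction using the Kontinuit\"atssatz (continuity principle), which characterises pseudoconvexity for an arbitrary domain, with no smoothness needed (see \cite{Hor}). So suppose $D$ is not pseudoconvex. Then there is a continuous family of analytic discs $\Phi_\lambda\colon\overline{\DD}\to\CC^n$, $\lambda\in[0,1]$, holomorphic on $\DD$, with $\Phi_\lambda(\partial\DD)\subset K$ for a fixed compact $K\subset\subset D$ and all $\lambda$, with $\Phi_0(\overline{\DD})\subset D$, but with $\Phi_1(\overline{\DD})\not\subset D$. Let $\lambda_0=\inf\{\lambda:\Phi_\lambda(\overline{\DD})\not\subset D\}$ be the first-touching parameter. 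By continuity $\Phi_{\lambda_0}(\overline{\DD})\subset\overline D$ and there is $\zeta_0\in\overline{\DD}$ with $a:=\Phi_{\lambda_0}(\zeta_0)\in\partial D$; since $\Phi_{\lambda_0}(\partial\DD)\subset K\subset\subset D$ stays at positive distance from $\partial D$, necessarily $\zeta_0\in\DD$. For $\lambda<\lambda_0$ one has $\Phi_\lambda(\overline{\DD})\subset D$, and $\Phi_\lambda(\zeta_0)\to a$ as $\lambda\uparrow\lambda_0$. Thus the touching point $a$ is a genuine boundary point, at which the hypothesis provides a neighbourhood $U_a$ and a function $u_a\in\PSH(D\cap U_a)$ with $\lim_{z\to a}u_a(z)=+\infty$.

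Next I would localise so that the barrier $u_a$ can be brought to bear. Since $\Phi_{\lambda_0}(\zeta_0)=a$, for every small $\delta>0$ the image $\Phi_{\lambda_0}(\{|\zeta-\zeta_0|\le\delta\})$ lies in $U_a$, and I would choose $\delta$ so that the circle $\{|\zeta-\zeta_0|=\delta\}$ is mapped by $\Phi_{\lambda_0}$ into $D$, i.e. avoids the touch set $T=\Phi_{\lambda_0}^{-1}(\partial D)$; its image then sits at positive distance from $\partial D$ and inside $U_a$. By uniform convergence $\Phi_\lambda\to\Phi_{\lambda_0}$, the same holds for $\lambda$ close to $\lambda_0$, so the restricted discs $\Phi_\lambda|_{\{|\zeta-\zeta_0|\le\delta\}}$ map into $D\cap U_a$ with boundary circles contained in a single compact set $K'\subset\subset D\cap U_a$. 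Consequently $u_a\circ\Phi_\lambda$ is subharmonic on $\{|\zeta-\zeta_0|<\delta\}$, and the maximum principle gives $u_a(\Phi_\lambda(\zeta_0))\le\max_{|\zeta-\zeta_0|=\delta}u_a(\Phi_\lambda(\zeta))\le\sup_{K'}u_a=:M<\infty$ for all such $\lambda$. Letting $\lambda\uparrow\lambda_0$ forces $\Phi_\lambda(\zeta_0)\to a$, whence $u_a(\Phi_\lambda(\zeta_0))\to+\infty$ by hypothesis, contradicting the uniform bound $M$. This contradiction proves that $D$ is pseudoconvex.

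The crux is the localisation in the second paragraph, and it is exactly here that the assumption $\lim$ (rather than merely $\limsup$) is indispensable: one needs $u_a$ to be large along the whole sequence of centres $\Phi_\lambda(\zeta_0)\to a$ while remaining bounded on the boundary circles, and a mere $\limsup$, as in Theorem~\ref{1}, would not push the centre values up. Two technical points must be treated with care. First, one must pick the radius $\delta$ so that the small boundary circle avoids the touch set $T$ of the limit disc; since $T$ is closed and $\Phi_{\lambda_0}$ non-constant, admissible radii are plentiful near $0$, and after a harmless perturbation of the family one may assume $\zeta_0$ is an isolated point of $T$, so such $\delta$ exists. Second, one must collect the restricted boundary circles into one compact subset of $D\cap U_a$ uniformly in $\lambda$, which follows from uniform convergence together with the positive distance of $\Phi_{\lambda_0}(\{|\zeta-\zeta_0|=\delta\})$ from $\partial D$. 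Alternatively the whole argument can be recast through plurisubharmonicity of $-\log\dist(\cdot,\partial D)$ and the localisation theorem for pseudoconvexity, but the continuity-principle formulation has the decisive advantage of pinning the touching point to the single boundary point $a$ at which a barrier is guaranteed.
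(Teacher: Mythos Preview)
Your argument follows the same line as the paper's: contradict non-pseudoconvexity by pushing analytic discs toward the boundary and applying the maximum principle to $u_a$ on small subdiscs whose centres tend to $a$ while their boundary circles stay in a fixed compact subset of $D\cap U_a$. The paper's proof is terse: it quotes Corollary~4.1.26 in \cite{Hor} to obtain a single disc $\varphi\in\mathcal O(\DD,D)$ with $\dist(\varphi(0),\partial D)<\dist(\varphi(\zeta),\partial D)$ for every $\zeta\in\DD_\ast$, and then refers to the argument of Proposition~\ref{2}. Concretely, one picks $a\in\partial D$ nearest to $\varphi(0)$ and translates $\varphi$ toward $a$; the strict inequality at the centre forces the image of every small circle $\{|\zeta|=r\}$ under each translate to stay at distance at least $\min_{|\zeta|=r}\dist(\varphi(\zeta),\partial D)-\dist(\varphi(0),\partial D)>0$ from $\partial D$, hence in a fixed compact subset of $D\cap U_a$, while the centres tend to $a$.

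Your Kontinuit\"atssatz version has a genuine gap at the touch-set step. Nothing in the continuity principle rules out that $T=\Phi_{\lambda_0}^{-1}(\partial D)$ contains a full neighbourhood of $\zeta_0$: since $\partial D$ may contain analytic discs, the holomorphic map $\Phi_{\lambda_0}$ can send an open set into $\partial D$. In that case every circle $\{|\zeta-\zeta_0|=\delta\}$ meets $T$, the limit circle lies (partly) in $\partial D$, and the circles $\Phi_\lambda(\{|\zeta-\zeta_0|=\delta\})$ for $\lambda\uparrow\lambda_0$ no longer sit in any compact subset of $D\cap U_a$; since $u_a$ is controlled only on such compacta, the uniform bound $M$ is lost. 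Your claim that ``$T$ closed and $\Phi_{\lambda_0}$ non-constant'' makes admissible radii plentiful is false in this generality, and the ``harmless perturbation'' is not substantiated. The clean remedy is precisely what the paper does: start from H\"ormander's disc whose centre is the \emph{strict} minimiser of the boundary distance, so that its translates touch $\partial D$ only at the centre and the touch-set difficulty never arises.
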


Note that the assumption in Proposition \ref{3} is formally weaker
that to assume that $D$ is locally pseudoconvex.

\vskip 0.3cm

\noindent{\it Remark.} The three propositions above have real
analogues replacing (non)pseudoconvex domains by (non)convex domains
and plurisubharmonic functions by convex functions (for the analogue
of Proposition 3 use e.g. Theorem 2.1.27 in \cite{Hor} which implies
that if $D$ is a nonconvex domain in $\Bbb R^n,$ then there exists a
segment $[a,b]$ such that $c=\frac{a+b}{2}\in\partial D$ but
$[a,b]\setminus\{c\}\subset D$). The details are left to the reader.
\smallskip

Recall now that a domain $D\subset\Bbb C^n$ is called {\it locally
weakly linearly convex} if for any boundary point $a\in\partial D$
there is a complex hyperplane $H_a$ through $a$ and a neighborhood
$U_a$ of $a$ such that $H_a\cap D\cap U_a=\varnothing.$ D. Jacquet
asked whether a locally weakly linearly convex domain is already
pseudoconvex (see \cite{Jac}, page 58). The answer to this
question is affirmative by Proposition \ref{3}. The next
proposition shows that such a domain has to be even
taut\footnote{This means that $\mathcal O(\Bbb D,D)$ is a normal
family, where $\Bbb D\subset\Bbb C$ is the open unit disc. Note
that any taut domain is pseudoconvex and any bounded pseudoconvex
domain with $C^1$-smooth boundary is taut.} if it is bounded.

\begin{proposition}\label{4}
Let $D\subset\Bbb C^n$ be a bounded domain with the following
property: for any boundary point $a\in\partial D$ there is a
neighborhood $U_a$ of $a$ and a function $f_a\in\mathcal O(D\cap
U_a)$ such that $\lim_{z\to a}|f_a(z)|=\infty.$ Then $D$ is taut.
\end{proposition}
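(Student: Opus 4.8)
The plan is to verify tautness directly from the normal-family definition. Recall that $D$ is taut precisely when every sequence $(\phi_k)\subset\O(\DD,D)$ admits a subsequence that either converges locally uniformly to a map in $\O(\DD,D)$ or is compactly divergent. Since $D$ is bounded, Montel's theorem lets me extract from any such $(\phi_k)$ a subsequence (not relabelled) converging locally uniformly to a holomorphic $\phi\colon\DD\to\overline D$. Everything then reduces to the following dichotomy: if $\phi(\DD)\cap\partial D\neq\varnothing$, then $\phi(\DD)\subset\partial D$. Granting this, either $\phi(\DD)\subset D$, giving the convergent alternative, or $\phi(\DD)\subset\partial D$; in the latter case, for every compact $K\subset\DD$ and $L\subset\subset D$ one has $\dist(\phi(K),L)\ge\dist(\partial D,L)>0$, so uniform convergence on $K$ forces $\phi_k(K)\cap L=\varnothing$ for large $k$, i.e. the sequence is compactly divergent.

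To prove the dichotomy, set $\Omega:=\phi^{-1}(D)$, an open subset of $\DD$, and suppose for contradiction that $\varnothing\neq\Omega\neq\DD$. As $\DD$ is connected, $\Omega$ is not closed in $\DD$, so I may pick a point $z_1$ in the relative boundary of $\Omega$; then $a:=\phi(z_1)\in\partial D$. Apply the hypothesis at $a$ to obtain $U_a$ and $f_a\in\O(D\cap U_a)$ with $|f_a(w)|\to\infty$ as $D\cap U_a\ni w\to a$. Choosing a smaller ball $V\subset U_a$ centred at $a$ on which $|f_a|\ge 1$, the reciprocal $F:=1/f_a$ is holomorphic and \emph{nowhere vanishing} on $D\cap V$, satisfies $|F|\le 1$ there, and $F(w)\to 0$ as $D\cap V\ni w\to a$. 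This passage to a bounded, zero-free local function is the crucial device.

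Next I localize the sequence. By continuity $\phi(\overline{D(z_1,\rho)})$ is a compact subset of $V$ for $\rho$ small, so locally uniform convergence gives $\phi_k(\overline{D(z_1,\rho)})\subset V$ for large $k$; since $\phi_k(\DD)\subset D$, in fact $\phi_k$ maps this disc into $D\cap V$, and $G_k:=F\circ\phi_k$ is a well-defined, zero-free holomorphic function on $D(z_1,\rho)$ with $|G_k|\le 1$. Because $\phi_k(z_1)\to a$ I get $G_k(z_1)\to 0$, while at a point $z_2\in\Omega\cap D(z_1,\rho)$ (which exists as $z_1$ lies on the relative boundary of $\Omega$) I have $\phi_k(z_2)\to\phi(z_2)\in D\cap V$, hence $G_k(z_2)\to F(\phi(z_2))\neq 0$. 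By Montel a subsequence of $(G_k)$ converges locally uniformly to a holomorphic $G$ with $G(z_1)=0$ and $G(z_2)\neq 0$, so $G\not\equiv 0$ yet $G$ has a zero. This contradicts Hurwitz's theorem, which forces the locally uniform limit of nowhere-vanishing holomorphic functions to be either identically zero or zero-free.

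The main obstacle I anticipate is exactly this dichotomy step, and in particular arranging the composition $F\circ\phi_k$ to be legitimate: one must first shrink the neighborhood so that $f_a$ is bounded away from zero (turning it into a bounded nonvanishing $F$), and then guarantee via uniform convergence that the maps $\phi_k$ stay inside $D\cap V$ on a fixed small disc, so that Hurwitz's theorem can be invoked. The remaining bookkeeping---the Montel extraction and the distance estimate yielding compact divergence---is routine.
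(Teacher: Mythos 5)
Your proof is correct and follows essentially the same route as the paper's: reduce tautness (via Montel and boundedness) to showing that a limit map touching $\partial D$ must map all of $\DD$ into $\partial D$, then pass to the bounded zero-free local function $1/f_a$ and derive a contradiction from Hurwitz's theorem. Your write-up is somewhat more explicit about why the compositions $F\circ\phi_k$ are well defined, but the underlying argument is identical.
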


Let $D\subset\Bbb C^n$ be a domain and let $K_D(z)$ denote the
Bergman kernel of the diagonal. It is well-known that $\log
K_D\in\PSH(D).$ Recall that

(v) if $D$ is bounded and pseudoconvex, and $\limsup_{z\to
a}K_D(z)=\infty$ for any $a\in\partial D,$ then $D$ is an
$L_h^2$-domain of holomorphy ($L_h^2(D):=L^2(D)\cap\mathcal O(D)$)
(see \cite{Pfl-Zwo}).

We show that the assumption of pseudoconvexity is essential.

\begin{proposition}\label{10} There is a non-pseudoconvex bounded domain
$D\subset\Bbb C^2$ such that $\limsup_{z\to a}K_D(z)=\infty$ for any
$a\in\partial D$.
\end{proposition}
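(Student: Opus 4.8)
The plan is to exploit the variational characterization $K_D(z)=\sup\{|f(z)|^2:\ f\in L_h^2(D),\ \|f\|_{L^2(D)}\le 1\}$. In particular $K_D(z)\ge |f(z)|^2/\|f\|_{L^2(D)}^2$ for every nonzero $f\in L_h^2(D)$, so to obtain $\limsup_{z\to a}K_D(z)=\infty$ at a given $a\in\partial D$ it suffices to produce a single $f\in L_h^2(D)$ with $\limsup_{z\to a}|f(z)|=\infty$. More robustly, if $\iota\colon D\to\hat D$ is a holomorphic injection of $D$ into a (possibly multisheeted) domain $\hat D$, then pulling back square-integrable functions along $\iota$ gives $K_D(z)\ge K_{\hat D}(\iota(z))$ for all $z\in D$; and when $\hat D$ is a bounded strictly pseudoconvex domain -- hence an $L_h^2$-domain of holomorphy -- one has $K_{\hat D}\to\infty$ at every boundary point of $\hat D$. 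The whole proof will be arranged so that this inequality forces the blow-up.

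The first thing to observe is why the example cannot be too naive, and this also dictates its shape. If the envelope of holomorphy $\hat D$ of $D$ were schlicht (a genuine subdomain of $\CC^n$) and strictly larger than $D$, then joining a point of $D$ to a point of $\hat D\setminus D$ by a path inside $\hat D$ produces a boundary point $a\in\partial D$ lying in the open set $\hat D$. Since restriction $\O(\hat D)\to\O(D)$ is a continuous bijection of Fr\'echet spaces, the extension map is continuous by the open mapping theorem; hence the bounded unit ball of $L_h^2(D)\subset\O(D)$ has extensions that are uniformly bounded on a fixed neighborhood of $a$, which gives $\limsup_{z\to a}K_D(z)<\infty$. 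Thus no non-pseudoconvex domain with schlicht envelope can work, and the example must be chosen with a genuinely multisheeted envelope of holomorphy.

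With this in mind I would build $D\subset\CC^2$ as a bounded, non-pseudoconvex, honest (univalent) domain whose envelope of holomorphy $\hat D$ is a two-sheeted branched covering of a region in $\CC^2$, chosen to be strictly pseudoconvex (so that $\hat D$ is an $L_h^2$-domain of holomorphy and $K_{\hat D}$ blows up along all of $\partial\hat D$). The model is a domain fibred over a base that encircles a branch locus -- morally the ``$\sqrt{z}$'' situation -- realised for instance as the image of a ball under $(\zeta,w)\mapsto(\zeta^2,w)$ with a partial slit removed, so that continuing a function on $D$ around the branch set returns it on the other sheet. The decisive mechanism is then the following: over each $a\in\partial D$ the covering exhibits (at least) two sheets, and one of them lands on $\partial\hat D$; choosing $z_k\to a$ along that sheet, $\iota(z_k)\to\partial\hat D$ and $K_D(z_k)\ge K_{\hat D}(\iota(z_k))\to\infty$. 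This is precisely why the statement is about $\limsup$ and not $\lim$: along the other sheet $\iota(z_k')$ tends to an interior point of $\hat D$ and $K_D(z_k')$ stays bounded, exactly as the schlicht obstruction of the previous paragraph predicts.

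The main obstacle is the construction itself -- exhibiting a concrete bounded honest domain that is simultaneously non-pseudoconvex and has a controlled multisheeted envelope -- together with the verification that every boundary point, both the smooth ``outer'' part and the delicate edge over the branch locus, admits an approach along a sheet reaching $\partial\hat D$. Equivalently, one must check that no boundary point of $D$ is swallowed into a schlicht interior piece of the envelope (which would ruin the blow-up there), and that $\hat D$ can indeed be taken strictly pseudoconvex so that the clean lower bound $K_D\ge K_{\hat D}\circ\iota$ is available. Establishing non-pseudoconvexity of $D$ and identifying its envelope precisely is where the real work lies; the $L^2$ comparison and the reduction above are then routine.
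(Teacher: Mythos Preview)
Your diagnosis that the envelope of holomorphy must be non-schlicht is exactly right, and your open-mapping argument is essentially the paper's Proposition~6. The comparison $K_D\ge K_{\hat D}\circ\iota$ is also the mechanism the paper ultimately leans on. What you have written, however, is a strategy rather than a proof: you explicitly leave the construction and the sheet-by-sheet boundary verification undone, and that is precisely where the content lies.

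The paper's realization is considerably more down-to-earth than your branched-ball sketch. It takes $D=(P\times\DD)\setminus S$, where $P$ is an annulus and $S$ is an explicit real hypersurface (half of a family of ellipses) bounding a cap $D_1\subset P\times\DD$; the envelope $\hat D$ is $D$ with a second copy of $D_1$ glued on. Two things then differ from your plan. First, boundary points in $\partial(P\times\DD)$ are handled by the trivial monotonicity $K_D\ge K_{P\times\DD}$ and Bergman exhaustiveness of the product, not by strict pseudoconvexity of $\hat D$. Second, and more importantly, $\hat D$ here is \emph{not} strictly pseudoconvex (its outer boundary is that of $P\times\DD$), so your intended shortcut ``$\hat D$ strictly pseudoconvex $\Rightarrow K_{\hat D}\to\infty$ everywhere'' is unavailable. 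For points $a\in S$ the paper instead invokes the Diederich--Forn\ae ss--Herbort localization of the Bergman kernel in Irgens's Riemann-domain version: near such $a$ the pseudoconvex envelope $\hat D$ is locally the convex set $D_1^\circ$, whose kernel blows up at $\partial D_1$, and localization transfers this to $K_D$ along approaches from within $D\cap D_1$. If you want to avoid that localization machinery and stick with your global comparison, you would need to actually produce a bounded schlicht $D$ whose multisheeted envelope is strictly pseudoconvex and then carry out the sheet check at every boundary point --- a construction you have not supplied.
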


Note that the domain $D$ with $u=\log K_D$ presents a similar
kind of example as that in Proposition \ref{1} (however, the
domain has weaker regularity properties).

The example given in Proposition \ref{10} is a domain with
non-schlicht envelope of holomorphy. This is not accidental as the
following result shows.

\begin{proposition}\label{11} Let $D\subset\Bbb C^n$ be a domain such that
$\limsup_{z\to a}K_D(z)=\infty$ for any $a\in\partial D$. Assume
that one of the following conditions is satisfied:

-- the envelope of holomorphy $\hat D$ of $D$ is a domain in
$\CC^n$,

-- for any $a\in\partial D$ and for any neighborhood $U_a$ of $a$
there is a neighborhood $V_a\subset U_a$ of $a$ such that $V_a\cap
D$ is connected (this is the case when e.g. $D$ is a $C^1$-smooth
domain).

Then $D$ is pseudoconvex.
\end{proposition}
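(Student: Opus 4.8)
The plan is to argue by contraposition: assuming that $D$ is not pseudoconvex, I would produce a single boundary point $a$ at which $\limsup_{z\to a}K_D(z)<\infty$, contradicting the hypothesis. Since $\log K_D\in\PSH(D)$, the hypothesis says that this global plurisubharmonic function blows up (along some sequence) at every boundary point; the content of the two extra assumptions is precisely to rule this out at a boundary point where $D$ fails to be pseudoconvex.

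First I would locate a good boundary point and set up a local holomorphic extension. In the schlicht case, pseudoconvex domains coincide with their envelope of holomorphy, so $D\subsetneq\hat D$; joining a point of $\hat D\setminus D$ to a point of $D$ by a path inside $\hat D$ produces a point $a\in\partial D\cap\inte\hat D$, and by definition of the envelope every $f\in\O(D)$ extends holomorphically to a fixed neighborhood $W$ of $a$. In the local-connectivity case I would instead use that non-pseudoconvexity is equivalent to the failure of local pseudoconvexity at some $a\in\partial D$ (Kontinuit\"atssatz); this yields a Hartogs figure contained in $D\cap U_a$ across which every $f\in\O(D\cap U_a)$ extends to a full neighborhood $W$ of $a$, and the assumption that $V_a\cap D$ be connected for small $V_a$ guarantees that this a priori multivalued extension is in fact single-valued, so $f$ extends to a genuine holomorphic function on $W$. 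In either case I may write $K_D(z)=\sup\{|f(z)|^2: f\in\O(D),\ \int_D|f|^2\le 1\}$ with each competitor $f$ extending holomorphically to the fixed neighborhood $W$ of $a$.

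The decisive step is an $L^2$ bound on the extension: I claim there is a constant $C$ and a neighborhood $W'\subset W$ of $a$ such that $\int_{W'}|\tilde f|^2\le C\int_D|f|^2$ for every $f\in L^2_h(D)$, where $\tilde f$ denotes the extension. The point is that the Hartogs extension is given by a Cauchy-type integral whose data are the values of $f$ on a band of slices lying inside $D$; integrating the Cauchy estimate over the slice parameters (which range within $D$) and applying Fubini together with the Cauchy--Schwarz inequality converts the $L^\infty$ Cauchy bound into the desired $L^2(D)$ bound. Granting this, I would finish as follows: fix $\rho>0$ with $B(a,2\rho)\subset W'$; then for $z\in D$ with $|z-a|<\rho$ the ball $B(z,\rho)$ lies in $W'$, and the sub-mean value inequality for the subharmonic function $|\tilde f|^2$ gives $|\tilde f(z)|^2\le \frac{1}{c_n\rho^{2n}}\int_{B(z,\rho)}|\tilde f|^2\le \frac{C}{c_n\rho^{2n}}\int_D|f|^2\le \frac{C}{c_n\rho^{2n}}$. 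Taking the supremum over admissible $f$ bounds $K_D$ on $D\cap B(a,\rho)$, so $\limsup_{z\to a}K_D(z)<\infty$, the desired contradiction.

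I expect the main obstacle to be exactly this $L^2$-bounded extension across the interior boundary point $a$, i.e. the comparability of $K_D$ with the Bergman kernel of the larger domain near $a$. The subtlety is that $\hat D\setminus D$ (resp. $W\setminus D$) may have positive Lebesgue measure, so one cannot simply equate the two $L^2$ norms; the argument must genuinely use that the extension reads off only the values of $f$ on a Hartogs sub-configuration sitting inside $D$. Keeping track of the geometry of this configuration, and verifying that the neighborhood $W'$ and the constant $C$ can be chosen uniformly (independently of $f$), is the technical heart of the proof.
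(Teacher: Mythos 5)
Your proposal is correct in outline, but it takes a genuinely different route from the paper. The paper disposes of both cases by quoting Bremermann's theorem: the Bergman kernel $K_D$ itself extends to a real analytic function on the schlicht envelope $\hat D$, respectively to a polydisc $P\supset P_0$ with $P_0\subset D$, $\bar P_0\cap(\CC^n\setminus D)\neq\varnothing$, across which all of $\O(D)$ continues; real analyticity forces local boundedness of $K_D$ at a boundary point interior to the extension domain, contradicting $\limsup_{z\to a}K_D(z)=\infty$ (the connectivity hypothesis being what lets one pass from boundedness on $P_0$ near $a$ to boundedness on all of $D$ near $a$). You instead re-derive this boundedness by hand: extend each extremal competitor $f$ with norm control and bound $K_D(z)=\sup\{|f(z)|^2:\|f\|_{L^2(D)}\le1\}$ pointwise. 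That is a sound and more self-contained argument, and you correctly identify the two real difficulties --- that $\hat D\setminus D$ (resp.\ $W\setminus D$) may have positive measure, so the $L^2$ norms cannot simply be compared, and that the extension must agree with $f$ on \emph{every} component of $D$ near $a$ (this, rather than single-valuedness per se, is what the connectivity hypothesis buys in the second case; in the first case it is supplied by $\hat f=f$ on all of $D$ when $\hat D$ is schlicht). Two small refinements: for the abstract envelope the extension is not literally a Cauchy integral over slices in $D$, but the standard locally uniform estimate $\sup_L|\hat f|\le\sup_K|f|$ with $K\Subset D$ compact, combined with the mean value inequality on $D$, gives $\sup_L|\hat f|\le C_K\|f\|_{L^2(D)}$ directly; this also shows that your intermediate $L^2$ bound $\int_{W'}|\tilde f|^2\le C\int_D|f|^2$ is a detour --- a sup bound on $\tilde f$ near $a$ in terms of $\|f\|_{L^2(D)}$ is all that is needed, and in the Hartogs-figure case it follows from the Cauchy integral plus the mean value inequality on the slices exactly as you sketch. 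What the paper's citation buys is brevity; what your argument buys is independence from the real-analytic continuation theorem for the kernel.
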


\noindent{\it Remark.} Note that the domain in the example is not
fat. We do not know what will happen if $D$ is assumed to be fat.

\vskip 0.3cm

Making use of the reasoning in \cite{Irg2} we shall see how
Proposition \ref{10} implies that the domain from this proposition
admits a function $f\in L_h^2(D)$ satisfying the
property $\limsup_{z\to a}|f(z)|=\infty$ for any $a\in\partial D.$

\begin{theorem}\label{12} Let $D$ be the domain from Proposition
\ref{10}. Then there is a function $f\in L_h^2(D)$ such that
$\limsup_{z\to a}|f(z)|=\infty$ for any $a\in\partial D.$
\end{theorem}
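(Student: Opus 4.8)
The plan is to run a Baire category argument in the Hilbert space $L_h^2(D)$. Since $D$ is bounded, $L_h^2(D)$ contains the constants, so it is a nonempty complete metric space and the Baire category theorem applies. The mechanism that links the hypothesis to the conclusion is the extremal characterization of the Bergman kernel on the diagonal,
\[
K_D(z)=\sup\{|g(z)|^2 : g\in L_h^2(D),\ \|g\|_{L^2(D)}\le 1\},
\]
which converts the blow-up of $K_D$ into unboundedness of the normalized point evaluations near $\partial D$.

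First I would reduce the uncountable family of boundary requirements to a countable one. Fixing a countable basis $(B_j)_{j}$ of balls with rational centers and radii that meet $\partial D$, one checks that the condition ``$\limsup_{z\to a}|f(z)|=\infty$ for every $a\in\partial D$'' is equivalent to ``$\sup_{D\cap B_j}|f|=\infty$ for every $j$''. Hence it suffices to find a single $f$ lying in each $A_j:=\{f\in L_h^2(D):\sup_{D\cap B_j}|f|=\infty\}$, and for this it is enough to prove that every complement $A_j^{\,c}$ is of the first category: then $\bigcap_j A_j$ is residual, hence nonempty, and any of its elements works.

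Writing $A_j^{\,c}=\bigcup_{M\in\mathbb N}F_{j,M}$ with $F_{j,M}:=\{f\in L_h^2(D):\sup_{D\cap B_j}|f|\le M\}$, I would first note that each $F_{j,M}$ is closed, because $L^2$-convergence of holomorphic functions implies locally uniform convergence and therefore preserves pointwise bounds. The decisive step is to show that $F_{j,M}$ has empty interior. If it had an interior point $f_0$ with corresponding radius $r>0$, then $\sup_{D\cap B_j}|f_0+g|\le M$ for all $g$ with $\|g\|_{L^2}<r$; the triangle inequality and homogeneity would yield $|g(z)|\le (2M/r)\,\|g\|_{L^2}$ for every $z\in D\cap B_j$ and every $g\in L_h^2(D)$. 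By the extremal formula this gives $K_D\le(2M/r)^2$ on $D\cap B_j$, contradicting $\sup_{D\cap B_j}K_D=\infty$, which holds because $B_j$ is a neighborhood of a boundary point $a$ with $\limsup_{z\to a}K_D(z)=\infty$.

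This empty-interior step is the heart of the matter and the only place where the hypothesis of Proposition \ref{10} is used; everything else is soft functional analysis. Once each $F_{j,M}$ is seen to be nowhere dense, the countable union $\bigcup_{j,M}F_{j,M}$ is meager, its complement $\bigcap_j A_j$ is residual in $L_h^2(D)$, and any $f$ in this residual set satisfies $\limsup_{z\to a}|f(z)|=\infty$ for all $a\in\partial D$, as desired.
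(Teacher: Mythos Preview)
Your argument is correct, and it takes a genuinely different route from the paper. The paper follows Irgens' constructive scheme: for a fixed $a\in\partial D$ it builds, by a gliding-hump series $h=\sum_j g_j/j^2$, an explicit $L_h^2$-function unbounded near $a$, and then invokes a Cartan--Thullen-type diagonal argument over a countable dense subset of $\partial D$ to produce one function that works at every boundary point. Your proof replaces both steps by a single Baire category argument in the Hilbert space $L_h^2(D)$: you reduce to a countable family of conditions via a rational basis of balls, show each $F_{j,M}$ is closed with empty interior (the latter via the extremal formula for $K_D$, which is the one point of contact with the paper's method), and conclude that the desired set of functions is residual. Your approach is shorter and yields the stronger statement that a generic $f\in L_h^2(D)$ has the required blow-up at every boundary point; the paper's approach, by contrast, is constructive and makes the mechanism---unbounded point evaluations forced by $K_D\to\infty$---more explicit. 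One minor cosmetic point: your constant $2M/r$ should be $4M/r$ (or similar) after the rescaling, but this is irrelevant to the argument.
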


\section{Proof of Proposition 1}

First, we shall prove two lemmas.

\begin{lemma}\label{5}
For any $\eps \in (0,1)$ and $C_1$, $C_2>0$, there exists an $F\in
\mathcal C^{1, 1-\eps}(\RR)$ such that:

{\rm (i)} $\supp F\subset [-1,+1]$, $0\le F(x)\le C_1$ for all $x
\in \RR$;

{\rm (ii)} there is a dense open set $\mathcal U \subset [-1,+1]$
such that $F''(x)$ exists and $F''(x) \le -C_2<0$ for all $x \in
\mathcal U$;

{\rm (iii)} $F$ vanishes on a Cantor subset of $[-1,+1].$
\end{lemma}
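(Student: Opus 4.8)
The plan is to construct $F$ as a uniformly convergent series of scaled copies of a single smooth model bump, placed along a recursive, Cantor-type subdivision of $[-1,1]$ so that the open set on which $F$ is strictly concave becomes dense while its complement shrinks to a Cantor set.

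I would first fix a model $g\in C^\infty(\RR)$ with $\supp g=[0,1]$, $0\le g\le1$, that vanishes to infinite order at $0$ and $1$ (so $g$ and all its derivatives die at the endpoints) and satisfies $g''\le -c_0<0$ on a central interval, say $[1/4,3/4]$. The infinite-order flatness is what will let rescaled translates be glued together in $C^1$ (indeed $C^\infty$ across the gluing points) and, more importantly, will make a coarse bump essentially second-order flat deep inside its own end zones, where finer bumps are later inserted. Two elementary scaling identities drive everything: a copy $A\,g((\cdot-a)/\ell)$ on an interval of length $\ell$ has second derivative $\asymp (A/\ell^2)\,g''$ and derivative $(1-\eps)$-Hölder seminorm $\asymp (A/\ell^{2-\eps})[g']_{1-\eps}$. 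Choosing the amplitude $A\asymp \ell^2$ (with implicit constant a fixed multiple of $C_2/c_0$) therefore pushes the second derivative below $-C_2$ on the central portion while keeping the Hölder seminorm of the derivative of size $\asymp \ell^{\eps}$. This last quantity tends to $0$ as $\ell\to0$, which is precisely the decay the construction needs and is exactly where the hypothesis $\eps>0$ is used.

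I would then run the recursion. Fix a ratio $\lambda\in(0,1)$ and subdivide $[-1,1]$ finely enough that every interval that ever carries a bump has length $\le \sqrt{C_1c_0/C_2}$, guaranteeing $A\asymp\ell^2\le C_1$. At stage $0$ put one bump on each piece, concave on its core; the non-concave remainder consists of the two end zones of each bump. At stage $n$ I subdivide every end zone surviving from stage $n-1$, delete a central Cantor gap, and add a fresh bump of scale $\ell_n\asymp\lambda^n$ and amplitude $A_n\asymp\ell_n^2$ on the central sub-interval, chosen large enough that its second derivative dominates $-C_2$ plus the (small) second-derivative contribution of all coarser bumps at that location. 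Because the coarser bumps are second-order flat near their endpoints and the scales are geometrically separated, that accumulated contribution is summably small, so $F''\le -C_2$ holds on the union $\mathcal U$ of all cores. The cores are dense (every subinterval eventually contains a whole inserted bump, hence its concave core), the amplitudes $A_n\asymp\lambda^{2n}$ sum to at most $C_1$ in height, and the set $K=[-1,1]\setminus\mathcal U$ is a Cantor set on which every bump, hence $F$, vanishes; this delivers (i)--(iii) at the level of the geometric and pointwise data.

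The main obstacle is the global $\mathcal C^{1,1-\eps}$ regularity: producing one constant $M$ with $|F'(x)-F'(y)|\le M|x-y|^{1-\eps}$ for all $x,y$, not merely within a single bump. I would first show $F'$ extends continuously by $0$ across $K$: for $x$ in a gap abutting $p\in K$, with the gap endpoint $a$ lying between $p$ and $x$, the per-bump estimate gives $|F'(x)|=|F'(x)-F'(a)|\le M|x-a|^{1-\eps}\le M|x-p|^{1-\eps}\to0$, so $F'$ is continuous with $F'|_K=0$ and $F$ is the integral of $F'$, hence $C^1$. The uniform series bound $\sum_n[\text{stage-}n\text{ seminorm}]\asymp\sum_n\lambda^{n\eps}<\infty$ (disjointness within a stage, together with the gluing below, keeping the stage multiplicity out of the seminorm) shows $F'$ is globally $(1-\eps)$-Hölder once the following gluing step is in place: given $x<y$, either $[x,y]$ lies in one gap, where the rescaled per-bump estimate applies directly, or $[x,y]$ meets $K$ at some $p$, and then $F'(p)=0$ yields $|F'(x)-F'(y)|\le|F'(x)|+|F'(y)|\le 2M|x-y|^{1-\eps}$. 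Reconciling the \emph{uniform} lower concavity bound $-C_2$, which forces amplitude $\asymp\ell^2$ on every scale, with this global Hölder control of $F'$ is the heart of the matter; it works only because the per-scale Hölder seminorm is $\asymp\ell^\eps$, and it is exactly this obstruction that prevents pushing the regularity up to $\mathcal C^{1,1}$, in agreement with the $C^2$ conclusion of Proposition \ref{2}.
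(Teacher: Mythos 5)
Your overall architecture (rescaled bumps placed along a recursive Cantor-type subdivision, a dense union of concave cores, and a per-scale H\"older seminorm of order $\ell^{\eps}$ summed geometrically) is the same as the paper's, but there is a genuine gap at the heart of claim (ii). You normalize $A_n\asymp\ell_n^2$ with a \emph{fixed} constant, so each bump contributes a bounded amount, $\asymp C_2/c_0\cdot\sup|g''|$, of second derivative, and you claim the accumulated contribution of the coarser bumps at a fine core is ``summably small'' because $g$ is flat to infinite order at the endpoints of its support. That flatness only helps at relative positions tending to $0$ or $1$ inside a coarse bump; but your stage-$n$ bumps sit in the \emph{central} sub-interval of an end zone, i.e.\ at a fixed relative position ($\approx 1/8$) inside the stage-$(n-1)$ bump, where $g''$ is of order one. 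Worse, this is unavoidable: since $\int_0^1 g''=0$ and $g''\le -c_0$ on the core, $g''$ is $\gtrsim c_0$ on a set of positive measure in the end zones, and density of $\mathcal U$ forces fine cores into exactly that region. So the ancestors of a stage-$n$ core contribute positive curvature of total size $\asymp n$ (not summable), which eventually swamps the fixed concavity $-c_0A_n/\ell_n^2$, and $F''\le -C_2$ fails on $\mathcal U$. The paper's proof confronts precisely this point by letting the per-scale curvature \emph{grow} geometrically, $a_n/p_n^2=BA^n$ with $A>1+C_4/4$, so that each new bump dominates the sum over all its ancestors; the price is the constraint $A\delta^{\eps}\le 1$ on the scale ratio $\delta$ in the H\"older estimate, which is exactly where $\eps>0$ enters and why the method stalls before $\mathcal C^{1,1}$. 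Your version makes the H\"older bound trivial only because it has given up the hard part of (ii).

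Two secondary but real errors in the same direction: the set $K=[-1,1]\setminus\mathcal U$ is \emph{not} a set where $F$ vanishes --- it contains the end zones minus the finer cores, which lie inside the supports of coarse bumps where $F>0$ --- so (iii) needs the smaller residual set of points avoiding \emph{all} supports (one must then check this is a nonempty perfect nowhere dense set, which is what the paper's condition $p_n<|I_{n,i}|/4$ guarantees). For the same reason $F'$ does not vanish on your $K$, so the gluing step of your global H\"older argument ($|F'(x)-F'(y)|\le|F'(x)|+|F'(y)|$ via $F'|_K=0$) does not go through as written; the paper avoids this by the two-scale splitting $|F'(x)-F'(y)|\le |x-y|\,\|F''_n\|_\infty+2\sum_{m\ge n}\|f'_m\|_\infty$ with $\delta^n\approx|x-y|$, which needs no information about where $F'$ vanishes.
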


\begin{proof} An elementary construction yields an even non-negative
smooth function $b$ supported on $[-3/4,+3/4]$, decreasing on $[0,
3/4]$, such that $b(x) = 1-4x^2$ for $|x|\le 1/4$, $|b'(x)| \le
C_3$, $-8 \le b''(x) \le C_4$ for all $x\in\Bbb R,$ where
$C_3,C_4>0.$

For any $a,p >0$, we set $b_{a,p}(x):= ab(x/p)$, $x\in\RR$.

We shall construct two decreasing sequences of positive numbers
$(a_n)_{n\geq 0}$ and $(p_n)_{n\geq 0}$, and intervals $\{
I_{n,i}, J_{n,i}, n \ge 0, 1\le i \le 2^n\}$.

Set $I_{0,1}:= (-1,+1)$ and $J_{0,1}:=[-p_0/4,p_0/4]$, where
$p_0<1$. Then $I_{1,1}:= (-1,-p_0)$ and $I_{1,2}:= (p_0,1)$.

In general, if the intervals of the n-th "generation" $ I_{n,i}$
are known, we require
\begin{equation}
\label{condpn} p_{n} < \frac{| I_{n,i}|}{2},
\end{equation}
where $|J|$ denotes the length of an interval $J$. Denote by
$c_{n,i}$ the center of $ I_{n,i}$ and put $J_{n,i}:=
[c_{n,i}-p_{n}/4, c_{n,i}+p_{n}/4]$. Denote respectively by $I_{n+1,
2i-1}$ and $I_{n+1, 2i}$ the first and second component of $I_{n,i}
\setminus J_{n,i}$.

Now we write
$$
f_n (x) := \sum_{i=1}^{2^n} b_{a_n,p_n}(x-c_{n,i}),\ x\in\Bbb R,
\quad F_n:= \sum_{m=0}^n f_m.
$$
Note that the terms in the sum defining $f_n$ have disjoint supports
contained in $[c_{n,i}-3p_{n}/4, c_{n,i}+3p_{n}/4]\subset
I_{n,i},$ ($J_{n,i}$ does not contain  the support of the
corresponding term in $f_n;$ it is only a place, where that term
coincides with a quadratical polynomial) so that $|f'_n(x)| \le
C_3a_n/p_n$. The function $F=\lim_{n\to\infty} F_n$ will be of class
$\mathcal C^1$ if
\begin{equation}
\label{condc1} \sum_{n=0}^\infty\frac{a_n}{p_n} < \infty.
\end{equation}
Also, note that
$$
|F''_n(x)|\le |F''_{n-1}(x)| + C_4 \frac{a_n}{p_n^2}\mbox{, so } \sup
|F''_n| \le C_4 \sum_{m=1}^n \frac{a_m}{p_m^2}.
$$

From now on we choose
\begin{equation}
\label{geom} \frac{a_n}{p_n^2} = BA^n\mbox{, for some }A>1, B>0
\mbox{ to be determined.}
\end{equation}
We then have $\sup |F''_n| \le C_4 B A^{n+1}/(A-1)$.

All the successive terms $f_m, m>n,$ are supported on intervals of
the form $I_{m,j}$, thus vanish on the interval $J_{n,i}$, so on
those intervals $F$ is a smooth function and
$$F''=F''_n=F''_{n-1}-8\frac{a_n}{p_n^2}\le C_4\frac{BA^{n}}{A-1}-8 B A^n;$$
therefore, if we choose
\begin{equation}
\label{condA} A>1+\frac{C_4}4,
\end{equation}
we have $F''(x)\le -4B A^n$ for all $x\in J_{n,i}$, and $1\le i
\le 2^n$.

Set $\mathcal U := \bigcup_{n,i} J_{n,i}^\circ$.  We have seen that
$| I_{n+1,i}|<|I_{n,j}|/2$ (and those quantities do not depend on
$i$ or $j$), so that the complement of $\mathcal U$ has empty
interior. This proves claim (ii), by choosing $B=C_2/4$. The other
claims are clear from the form of the function $F$, once we provide
the sequences $(a_n)$ and $(p_n)$ satisfying \eqref{geom},
\eqref{condA}, \eqref{condc1}, and \eqref{condpn}.

Let $a_n:= a_0 \gamma^n$, $p_n=p_0\delta^n$. Then \eqref{geom} is
satisfied by construction and $a_0 = Bp_0^2$. Fix
$\delta,p_0\in(0,1/2).$  It follows that $p_{n}<| I_{n,i}|/4$ for
all $n$ (by an easy induction). Hence, \eqref{condpn} holds.

By our explicit form, \eqref{condA} means that $\gamma \delta^{-2}
> 1+\frac{C_4}4$, while \eqref{condc1} means $\gamma \delta^{-1} <
1$, so with $\delta^{-1} > 1+\frac{C_4}4$, it is easy to choose
$\gamma$. Finally $\|F\|_\infty \le a_0 (1-\gamma)^{-1} < C_1$ for
$a_0$ small enough, which can be achieved by decreasing $p_0$
further.

Given any $\varepsilon >0$, we can modify the choices of $\delta$
and $\gamma$ to obtain that $F' \in \Lambda_{1-\eps}$ (the H\"older
class of order $1-\eps$). Given any two points $x,y \in [-1,+1]$ and
any integer $n \ge 1$,
$$
|F'(x) - F'(y)| \le |x-y| \| F''_n \|_\infty + 2 \sum_{m\ge n} \|
f'_m\|_\infty
$$
$$\le C \left( (\gamma \delta^{-2})^n |x-y| +
(\gamma \delta^{-1})^n \right),
$$
where $C>0$ is a positive  constant depending on the parameters we
have chosen. Take $n$ such that $\delta  |x-y|  \le \delta^n \le
|x-y| $. Then
$$
\frac{|F'(x) - F'(y)|}{|x-y|^{1-\eps}} \le C'  (\gamma
\delta^{-2+\eps})^n,
$$
and it will be enough to choose $\delta$ and $\gamma$ so that
$\gamma \delta^{-2+\eps}\le 1$ and $\gamma \delta^{-2} >
1+\frac{C_4}4$, which can be achieved once we pick $\delta$ small
enough.  The rest of the parameters are then chosen as above.
\end{proof}

\noindent{\it Remark.} It is clear that $F$ cannot be of class
$\mathcal C^2(\Bbb R)$. We do not know if our argument can be pushed
to get $F\in \mathcal C^{1,1}(\Bbb R).$
\smallskip

\begin{lemma}\label{6}
For any $\eps\in(0,1)$ there exists a non-pseudoconvex bounded
$\mathcal C^{1, 1-\eps}$-smooth domain $D\subset \CC^2$ boundary
such that $\partial D$ contains a dense subset of points of strict
pseudoconvexity.
\end{lemma}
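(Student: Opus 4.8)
The plan is to use Lemma 5 to build a domain in $\CC^2$ of the form of a "Hartogs-type" region whose boundary is governed by the function $F$. Given the $\mathcal C^{1,1-\eps}$ function $F$ from Lemma 5, I would define
$$
D:=\{(z,w)\in\CC^2: |w|<1,\ |z|^2 < 1 - F(\operatorname{Re} w)\}
$$
or some closely related variant (possibly writing the defining inequality as $|z|^2 + F(\operatorname{Re} w) < 1$ with appropriate normalization and an extra cutoff to keep $D$ bounded and to control the boundary near $|w|=1$). The point of this choice is that along the directions where $F''<0$ the domain fails the Levi pseudoconvexity condition, while at the points of the Cantor set where $F\equiv 0$ and $F''=0$ the boundary is locally spherical, hence strictly pseudoconvex.

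The key steps, in order, are as follows. First I would verify that $\partial D$ is $\mathcal C^{1,1-\eps}$: since $F\in\mathcal C^{1,1-\eps}(\RR)$ and the defining function $\rho(z,w):=|z|^2+F(\operatorname{Re} w)-1$ is smooth in $z$ and inherits the regularity of $F$ in $w$, the gradient of $\rho$ is H\"older continuous of order $1-\eps$, and I would check $\nabla\rho\neq0$ on $\partial D$ so that the implicit function theorem gives the claimed boundary smoothness. Second, at points of $\partial D$ lying over the open set $\mathcal U$ from Lemma 5(ii), I would compute the Levi form of $\rho$ in the complex tangential direction and use $F''\le -C_2<0$ to conclude the Levi form is negative there, so $D$ is \emph{not} pseudoconvex. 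Third, at boundary points lying over the Cantor set from Lemma 5(iii), where $F$ vanishes together with its derivatives, the defining equation reduces locally to $|z|^2=1$, i.e. a piece of sphere; there the Levi form is strictly positive, giving strict pseudoconvexity. Finally, since by Lemma 5(ii)--(iii) both $\mathcal U$ and the zero set of $F$ are dense in $[-1,1]$, the corresponding boundary points are dense in $\partial D$, yielding both a dense set of non-pseudoconvex points (so $D$ is not pseudoconvex) and a dense set of strictly pseudoconvex points, as required.

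The main obstacle I anticipate is the Levi-form computation at the \emph{non-smooth} boundary points, i.e. those lying over the complement of $\mathcal U$ (the points where $F''$ need not exist). Since $F$ is only $\mathcal C^{1,1-\eps}$ and not $\mathcal C^2$, the Levi form is not classically defined on a dense-complement set, so I would need to argue pseudoconvexity/non-pseudoconvexity in a way that does not require the second derivative everywhere. The cleanest route is to establish non-pseudoconvexity directly from the failure of the Kontinuit\"atssatz or from Proposition 2: since $D$ is $\mathcal C^2$ at no neighborhood of the bad points but the Levi form is genuinely negative on the dense open set $\mathcal U$, I can exhibit an analytic disc (or a Hartogs figure) straddling a boundary point over $\mathcal U$ whose boundary lies in $D$ but whose interior pokes out, thereby directly violating pseudoconvexity without invoking second derivatives on the full boundary. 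The strict pseudoconvexity at Cantor points is comparatively routine, being a local sphere computation, so the delicate balancing is ensuring the single function $F$ simultaneously produces genuine negativity of the Levi form on a dense open set and exact sphericity on a dense closed set, which is exactly what Lemma 5 was engineered to provide.
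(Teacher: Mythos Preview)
Your assignment of roles is inverted, and that inversion is fatal.  In your domain the hypersurface $|z|^2=1-F(\operatorname{Re}w)$ has complex tangent direction $(X,W)$ with $\bar z X+\tfrac12 F'(u)W=0$, and over a Cantor point $u_0$ (where $F(u_0)=F'(u_0)=0$) this forces $X=0$, so the Levi form reduces to $\tfrac14 F''(u_0)|W|^2$.  But $F$ is \emph{not} $\mathcal C^2$ at $u_0$, the Cantor set has empty interior, and the hypersurface $|z|=1$ in $\CC_z\times\CC_w$ is a cylinder, hence Levi--flat, not a sphere.  So the claim that the boundary ``reduces locally to a piece of sphere'' and is strictly pseudoconvex there is simply false.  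More decisively, strict pseudoconvexity is an \emph{open} condition on $\mathcal C^2$ boundaries: a boundary point cannot be strictly pseudoconvex if every neighborhood of it contains points with negative Levi form, which is exactly the situation over your dense open set $\mathcal U$.  Finally, the Cantor set is nowhere dense, so even if those points were strictly pseudoconvex they would not form a dense subset of $\partial D$.

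The paper does the opposite.  It first builds a \emph{smooth} Hartogs domain $D_0$ (a ball caved in at the north pole) that is genuinely strictly pseudoconcave on an open piece of its boundary, and then adds the perturbation $\Phi(z)=F(x/r_0)\chi(y/r_0)$ to the defining function.  On the dense open set where $x/r_0\in\mathcal U$ the boundary is $\mathcal C^\infty$ and $F''\le -C_2$ is chosen large enough to overpower the concavity of $\psi$, yielding strict \emph{pseudoconvexity} there; this is where the ``dense subset of points of strict pseudoconvexity'' lives, and the open condition is harmless since $\mathcal U$ is open.  Non-pseudoconvexity, on the other hand, is not read off from the (non-existent) Levi form of $D$ at Cantor points, but from the inclusion $D_0\subset D$ together with $\partial D_0\cap\partial D\neq\varnothing$ at those points: since the smooth domain $D_0$ is strictly pseudoconcave there, a Hartogs figure inside $D_0$ pushes through $\partial D_0=\partial D$, so $D$ fails pseudoconvexity.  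In short, the dense open set $\mathcal U$ must carry the strict pseudoconvexity (where smoothness is available), and the Cantor set must carry the failure of pseudoconvexity (established by comparison with a smooth subdomain), not the other way around.
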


\begin{proof}
We start with the unit ball and cave it in somewhat at the North
Pole to get an open set of points of strict pseudoconcavity on the
boundary. Let $r_0 < 1/3$ and for $x \in [0,1)$,
$$
\psi_0 (x) = \min\{\log (1-x^2), x^2-r_0^2\}. \footnote{Note that
the graphs of both functions cut inside the interval
$(r_0/2,r_0).$ Indeed, $x^2-r_0^2>\log(1-x^2)$ for $x\ge r_0^2$
and $x^2-r_0^2<\log(1-x^2)$ for $x\le r_0^2/2.$}
$$
We take $\psi$ a $\mathcal C^\infty$ regularization of $\psi_0$
such that $\psi=\psi_0$ outside of $(r_0/2,r_0)$. Consider the
Hartogs domain
$$
D_0 := \left\{ (z,w) \in \CC^2 : |z|<1, \log |w| < \frac12 \psi
(|z|) \right\}.
$$
Notice that $D_0 \setminus \{ |z| \le r_0 \} = \BB_2 \setminus \{
|z| \le r_0 \}$, so that $\partial D$ is smooth near $|z|=1$.

Now define $\Phi(z)=\Phi (x+iy) = F(x/r_0) \chi  (y/r_0)$, where $F$
is the function obtained in Lemma \ref{5}, and $\chi$ is a smooth,
even cut-off function on $\RR$ such that $0\le \chi \le 1$,
$\mbox{supp}\, \chi \subset (-2,2)$, and $\chi\equiv 1$ on $[-1,1]$.
We define
$$
D:= \left\{ (z,w) \in \CC^2 : |z|<1, \log |w| < \frac12 \psi (|z|)
+ \Phi (z) \right\}.
$$
Recall that for a Hartogs domain $\{\log|w|<\phi(z), |z|<1\},$ if
$\phi$ is of class $\mathcal C^2$ at $z_0$, a boundary point
$(z_0,w_0)$ with $|z_0|<1$ is strictly pseudoconvex (respectively,
strictly pseudoconcave) if and only if $\Delta \phi (z_0) <0$
(respectively, $\Delta \phi (z_0)>0$). Choosing an appropriate
regularization (convolution  by a smooth positive kernel of small
enough support), we may get that:
\begin{itemize}
\item
$\Delta \psi (|z|)\le-4$ for $|z|\ge r_0$,
\item
$\Delta \psi (|z|)=4$ for $|z|\le r_0/2$, and is always $\le 4$.
\end{itemize}
We consider points $z_0=x+iy$. If $|x|>r_0$, $\Phi (z_0)=0$ and we
have pseudoconvex points (the boundary is a portion of the
boundary of the ball).

On the other hand, when $x \in r_0 \mathcal U$ (where $\mathcal U$
is the dense open set defined in Lemma \ref{5}),
$$
\Delta \Phi (z_0) = \frac1{r_0^2} \Bigl( F''(x/r_0) \chi (y/r_0) +
F(x/r_0) \chi''  (y/r_0) \Bigr).
$$
The only values of $z_0$ for which $F(x/r_0) \chi''  (y/r_0) \neq
0$ or $\chi  (y/r_0) <1$
 verify $|z_0|>r_0$, and at those points
we have, using the fact that $F''(x/r_0) <0$,
$$
\frac12 \Delta \psi (|z_0|) + \Delta \Phi (z_0) \le -4 +
\frac1{r_0^2} C_1 \|\chi''\|_\infty \le -1
$$
if we choose $C_1$ small enough. Hence we have strict
pseudoconvexity again.

So we may restrict attention to $|y|\le r_0$ and $\Delta \Phi (z_0)
=  F''(x/r_0) /r_0^2.$ Therefore
$$
\frac12 \Delta \psi (|z_0|) + \Delta \Phi (z_0) \le  2 - C_2/r_0^2
< - 2
$$
for a $C_2$ chosen large enough.

Finally, notice that points $(z_0,w_0)$ with $|z_0|<r_0/2$ and
$F(x)=0$ verify $(z_0,w_0) \in \partial D_0 \cap
\partial D$,  $ D_0 \subset D$, and $D_0$ is
strictly pseudoconcave at $(z_0,w_0)$, so $D$ is as well.
\end{proof}

\noindent{\it Proof of Proposition \ref{1}.} Let $D$ be the domain
from Lemma \ref{6}. We may choose a dense countable subset
$(a_j)\subset\partial D$ of points of strict pseudoconvexity. For
any $j,$ there is a negative function $u_j\in\PSH(D)$ with
$\lim_{z\to a_j}u_j(z)=0.$ If $(D_j)$ is an exhaustion of $D$ such that $D_j\Subset
D_{j+1}$ and
$m_j=-\sup_{D_j}{u_j},$ then it is enough to take $u$ to be the upper
semicontinuous regularization of $\sup_j u_j/m_j.$\qed

\section{Proofs of Propositions 2, 3 and 4}

\noindent{\it Proof of Proposition \ref{2}.} We may assume that
$D$ has a global defining function $r:U\to\RR$ with $U=U(\partial
D)$, $r\in\mathcal{C}^2(U)$, and $\grad r\neq 0$ on $U$, such that
$D\cap U=\{z\in U:r(z)<0\}$.

Now assume the contrary. Then we may find a point $z^0\in\partial
D$ such that the Levi form of $r$ at $z^0$ is not positive
semidefinite on the complex tangent hyperplane to $\partial D$ at
$z_0.$ Therefore, there is a complex tangent vector $a$ with $\L
r(z_0,a)\le-2c<0$, where $\L r(z_0,a)$ denotes its Levi form at
$z^0$ in direction of $a$. Moreover, we may assume that
$|\frac{\partial r}{\partial z_1}(z_0)|\geq 2c.$

Now choose $V=V(z^0)\subset U$ and $u\in\PSH(D\cap V)$ with
$$\limsup_{D\cap V\owns z\to z_0}u(z)=\infty; $$ in particular,
there is a sequence of points $D\cap V\ni b^j\to z_0$ such that
$u(b^j)\to\infty$.

By the $\mathcal{C}^2$-smooth assumption, there is an $\eps_0>0$
such that for all $z\in\BB(z_0,\eps_0)\subset V$ and all $\tilde
a\in\BB(a,\eps_0)$ we have

$$
\L r(z,\tilde a)\leq -c,\quad |\frac{\partial r}{\partial
z_1}(z)|\geq c.
$$

Now fix an arbitrary boundary point $z\in\partial
D\cap\BB(z_0,\eps_0)$. Define
$$
a(z):=a+(-\frac{\sum_{j=1}^n a_j\frac{\partial r}{\partial
z_j}(z)}{\frac{\partial r}{\partial z_1}(z)},0,\dots,0).
$$
Observe that this vector is a complex tangent vector at $z$ and
$a(z)\in\BB(a,\eps_0)$ if $z\in\BB(z_0,\eps_1)$ for a sufficiently
small $\eps_1<\eps_0$.

Now, let $z\in\partial D\cap\BB(z_0,\eps_1)$. Put
$$
b_1(z):=\frac{\L r(z,a(z))}{2\frac{\partial r}{\partial z_1}(z)}
$$
and
$$
\phi_z(\lambda)=z+\lambda a+(\lambda a_1(z)+\lambda^2
b_1(z),0,\dots,0),\quad \lambda\in\CC.
$$

Moreover, if $\eps_1$ is sufficiently small, we may find $\delta,
t_0>0$ such that for all $z\in\partial D\cap\BB(z_0,\eps_1)$ we
have
$$
\overline D\cap \BB(z,\delta)-t\nu(z)\subset D,\quad 0<t\leq t_0,
$$
where $\nu(z)$ denotes the outer unit normal vector of $D$ at $z$.

Next using the Taylor expansion of $\phi_z$, $z\in\partial
D\cap\BB(z_0,\eps_1)$, $\eps_1$ sufficiently small, we get
$$
r\circ\phi_z(\lambda)=|\lambda|^2\Bigl(\L
r(z,a(z))+\eps(z,\lambda\Bigr),
$$
where $|\eps(z,\lambda)|\leq \eps(\lambda)\to 0$ if $\lambda\to
0$.

In particular, $\phi_z(\lambda)\in \BB(z,\delta)\cap D\subset
V\cap D$ when $0<|\lambda|\leq \delta_0$ for a certain positive
$\delta_0$ and $r\circ\phi_z(\lambda)\leq -\delta_0^2c/2$ when
$|\lambda|=\delta_0$.

Hence, $K:=\bigcup_{z\in\partial D\cap \BB(z_0,\eps_1),
|\lambda|=\delta_0}\phi_z(\lambda)\Subset D\cup V$. Choose an open
set $W=W(K)\Subset D\cap V$. Then $u\leq M$ on $W$ for a positive
$M$.

Finally, choose a $j_0$ such that $b^j=z^j-t_j\nu(z^j)$, $j\geq
j_0$, where $z^j\in\partial D\cap\BB(z_0,\eps_1)$, $0<t_j\leq
t_0$, and $\phi_{z^j}(\lambda)\in W$ when $|\lambda|=\delta_0$.
Therefore, by construction, $u(b^j)\leq M$, which contradicts the
assumption.\qed
\smallskip

\noindent{\it Proof of Proposition \ref{3}.} Assume that $D$ is
not pseudoconvex. Then, by Corollary 4.1.26 in \cite{Hor}, there
is $\varphi\in\mathcal O(\Bbb D, D)$ such
$\dist(\varphi(0),\partial D)<\dist(\varphi(\zeta),\partial D)$
for any $\zeta\in\Bbb D_\ast.$ To get a contradiction, it remains
to use similar arguments as in the previous proof and we skip the
details.
\smallskip

\noindent{\it Proof of Proposition \ref{4}.} It is enough to show
that if $\mathcal O(\Bbb D,D)\ni\psi_j\to\psi$ and
$\psi(\zeta)\in\partial D$ for some $\zeta\in\Bbb D,$ then
$\psi(\Bbb D)\subset\partial D.$ Suppose the contrary. Then it is
easy to find points $\eta_k\to\eta\in\Bbb D$ such that
$\psi(\eta_k)\in D$ but $a=\psi(\eta)\in\partial D.$ We may assume
that $\eta=0$ and $g_a=\frac{1}{f_a}$ is bounded on $D\cap U_a.$ Let
$r\in(0,1)$ be such that $\psi(r\Bbb D)\Subset U_a.$ Then
$\psi_j(r\Bbb D)\subset U_a$ for any $j\ge j_0.$ Hence
$|g_a\circ\psi_j|<1$ and we may assume that $g_a\circ\psi_j\to
h_a\in\mathcal O(r\Bbb D,\Bbb C).$ Since $h_a(\eta)=0,$ it follows
by the Hurwitz theorem that $h_a=0.$ This contradicts the fact that
$h_a(\eta_k)=g_a\circ\psi(\eta_k)\neq 0$ for $|\eta_k|<r.$ \qed

\section{Proofs of Propositions 5, 6 and 7}

\noindent{\it Proof of Proposition \ref{10}.} Our aim is to
construct a non-pseudoconvex bounded domain $D\subset\CC^2$ such
that $\limsup_{z\to a}K_D(z)=\infty$ for any $a\in\partial D$.

Let us start with the domain $P\times\Bbb D$, where
$P=\{\lambda\in\Bbb C:\frac{1}{2}<|\lambda|<\frac{3}{2}\}$. Let
$$
S:=\{(z_1,z_2)=(x_1+iy_1,z_2)\in
P\times\DD:(x_1-1)^2+\frac{1+|z_2|^2}{1-|z_2|^2}y_1^2=\frac{1}{4},y_1>0\}.
$$
Define $D:=(P\times\DD)\setminus S$. Note that $D$ is a domain.
Its envelope of holomorphy is non-schlicht and consists of the
union of $D$ and one additional 'copy' of the set
$$
D_1:=\{(z_1,z_2)\in
P\times\DD:(x_1-1)^2+\frac{1+|z_2|^2}{1-|z_2|^2}y_1^2\leq\frac{1}{4},y_1>0\}.
$$
In particular, $D$ is not pseudoconvex. Note that convexity of the
the interior $D^0$ of $D_1$ implies that $\lim_{z\to\partial
D_1}K_{D^0}(z)=\infty$. Therefore, it follows from the localization
result for the Bergman kernel due to Diederich-Fornaess-Herbort
formulated for Riemann domains in the paper \cite{Irg1} that for all
$a\in S\subset\partial D_1$ the following property holds:
$\lim_{D\cap D_1\owns z \to a}K_D(z)=\infty$ (on the other hand
while tending to the points from $S$ from the 'other side' of the
domain $D$ the Bergman kernel is bounded from above). Obviously
$P\times\DD$ is Bergman exhaustive, so for any
$a\in\partial(P\times\DD)$ the following equality holds $\lim_{z\to
a}K_D(z)=\infty$.\qed
\smallskip

\noindent{\it Proof of Proposition \ref{11}.} Recall the following
facts that follow from \cite{Bre}.

If the envelope of holomorphy $\hat D$ of the domain $D$ is a domain
in $\Bbb C^n$ (is schlicht) then the Bergman kernel $K_D$ extends to
a real analytic function $\tilde K_D$ defined on $\hat D$.

Let $\emptyset\neq P_0\subset D$, $P_0\subset P$, $P\setminus
D\neq\emptyset$ and $\bar P_0\cap (\Bbb C^n\setminus
D)\neq\emptyset$, where $P_0,P$ are polydiscs, and the following
property is satisfied: for any $f\in\O(D)$ there is a function
$\tilde f\in\O(P)$ such that $f=\tilde f$ on $P_0$. Then the
Bergman kernel $K_D$ extends to a real analytic function on $P$.
More precisely, there is a real analytic function $\tilde K_D$
defined on $P$ such that $\tilde K_D(z)=K_D(z)$, $z\in P_0$.

Both facts above complete the proof of Proposition \ref{11}.\qed
\smallskip

The proof of Proposition \ref{12} is essentially contained in
\cite{Irg2}. However, this PhD Thesis is not publically accessible.
Therefore we repeat it here. The idea is the following: if
$\limsup_{z\to a}K_D(z)=\infty$ for some $a\in\partial D,$ then
there is an $f\in L_h^2(D)$ such that $\limsup_{z\to
a}|f(z)|=\infty$.
\smallskip

\noindent{\it Proof of Proposition \ref{12}.} In view of Proposition
5, $\limsup_{z\to a}K_D(z)=\infty$ for any $a\in\partial D$.

Let $a\in\partial D$. We claim that there is an $L_h^2(D)$-function
$h$ which is unbounded near $a$.

Assume the contrary. Hence for any $f\in L_h^2(D)$ there exists a
neighborhood $U_f$ of $a$ and a number $M_f$ such that $|f|\leq M_f$
on $D\cap U_f.$

Denote by $L$ the unit ball in $L_h^2(D)$ and by $c=\pi^n$.

Let $K_1:=\{z\in D:\dist(z,\partial D)\geq 1\}$ (if this is empty
take a smaller number than $1$). By the meanvalue inequality we have
for any $f\in L$ that $|f|\leq c$ on $K_1$. By assumption, there are
$z_1\in D$ and $f_1\in L$ such that $|z_1-a|<1$ and $|f_1(z_1)|>c$.

Set $g_1:=f_1/c$. Then $g\in L$ and therefore there are a
neighborhood $U_1$ of $a$ and number $M_1>1$ such that $|g_1|\leq
M_1$ on $D\cap U_1$.

Set $K_2:=\{z\in D:\dist(z,\partial D)\geq\dist(z_1,\partial D)\}$
and $d=c\dist(z_1,\partial D).$ Then $K_1\subset K_2$. Choose
$z_2\in U_1\cap D$, $z_2\notin K_2,$ $|z_2-a|<1/2,$ and $f_2\in L$
with $|f_2(z_2)|\geq d(1^3+1^2M_1)$. Moreover, $|f_2|\leq d$ on
$K_2$. Put $g_2:=f_2/d$. Then $g_2\in L$. Choose now a neighborhood
$U_2$ of $a$ and a number $M_2$ such that $|g_2|\leq M_2$ on $D\cap
U_2$.

Then we continue this process.

So we have points $z_k\in K_{k-1}$, $z_k\notin K_{k-1}$,
$|z_k-a|<1/k,$ and functions $f_k\in L$ with
$$|f_k(z_k)|\geq c\dist(z_{k-1},\partial
D)^n(k^3+k^2\sum_{j=1}^{k-1}M_j).$$

Setting $g_k:=f_k/d$ and $h:=\sum_{j=1}^\infty g_j/j^2,$ it is
clear that $h\in L_h^2(D)$.

Fix now $k\ge 2$. Then $$|h(z_k)|\geq
\frac{|g_k(z_k)|}{k^2}-\sum_{j=1}^{k-1}\frac{|g_j(z)|}{j^2}-
\sum_{j=k+1}^\infty\frac{|g_j(z)|}{j^2}$$
$$\ge
k+\sum_{j=1}^{k-1}M_j-\sum_{j=1}^{k-1}\frac{M_j}{j^2}-\sum_{j=k+1}^\infty
\frac{1}{j^2}>k-\frac{1}{6}.$$ In particular, $h$ is unbounded at
$a$ which is a contradiction.

It remains to choose a dense countable sequence
$(a_j)\subset\partial D$ such that any term repeats infinitely
many times and to copy the proof of the Cartan-Thullen theorem.
\qed

\end{document}